\numberwithin{equation}{section}
\newtheorem{theorem}{Theorem}[section]
\newtheorem{corollary}{Corollary}[theorem]
\begin{document}
\author{Alexander E. Patkowski}
\title{On the harmonic continuation of the Riemann xi function}

\maketitle
\begin{abstract}We generalize the harmonic continuation of the Riemann xi-function to the $n$-dimension case, to obtain the solution to the Dirichlet problem on $\mathbb{R}_{+}^{n+1}.$ We also provide a new expansion for the harmonic continuation of the Riemann xi-function using an expansion given by R.J. Duffin. \end{abstract}

% AMS keywords (used in AMS journals)
\keywords{\it Keywords: \rm Fourier Integrals; Riemann xi function}

% AMS subject classifications (used in AMS journals)
\subjclass{ \it 2010 Mathematics Subject Classification 11M06, 31B20.}

\section{Introduction and main result}
Recall the Riemann xi-function is given as $\xi(s)=\frac{1}{2}s(s-1)\pi^{-\frac{s}{2}}\Gamma(\frac{s}{2})\zeta(s),$ where $\zeta(s)$ is the Riemann zeta function [11], and $\Gamma(s)$ is the gamma function. In a recent paper [9], we presented an application of classical integrals involving the Riemann xi-function to the Dirichlet problem in the half plane. Let $L(s)$ denote the self-dual principal automorphic $L$-functions and $\bar{\gamma}(s)$ its associated gamma factor, then our main theorem presented therein is given in the following.

\begin{theorem} ([9, Theorem 3.1]) The solution of Dirichlet's problem in the half plane,
$$\frac{\partial^2 w}{\partial y^2}+\frac{\partial^2 w}{\partial x^2}=0,$$
where $y\in\mathbb{R},$ $x\ge0,$ initial condition $w(y,0)=h(y):=\bar{\gamma}(\frac{1}{2}+iy)L(\frac{1}{2}+iy),$ $w(y,x)\rightarrow0,$ as $|y|\rightarrow\infty,$ is given by the Poisson integral. Furthermore, the solution also satisfies the condition $w(0,s-\frac{1}{2})=h(-i(s-\frac{1}{2}))-r(s),$ for an analytic function $r(s)$ that satisfies $h(-i(s-1/2))=C/(s(s-1))+r(s)+r(1-s).$
\end{theorem}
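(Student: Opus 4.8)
The plan is to treat the two assertions separately: the first is the classical solvability of the half-plane Dirichlet problem, and the second is an analytic-continuation identity that I would derive from the symmetric integral representation of the completed $L$-function together with the Fourier-multiplier description of the Poisson extension.

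For the first assertion, I would begin by checking that $h(y)=\bar{\gamma}(\tfrac12+iy)L(\tfrac12+iy)$ is admissible boundary data. On the critical line $L$ has no poles, and the archimedean factor $\bar{\gamma}$ forces exponential decay in $|y|$, so $h$ is continuous, even (by self-duality), and rapidly decreasing. Consequently the Poisson integral
$$w(y,x)=\frac{1}{\pi}\int_{-\infty}^{\infty}\frac{x\,h(t)}{(y-t)^{2}+x^{2}}\,dt$$
converges, is harmonic for $x>0$, attains the boundary value $h$ as $x\downarrow 0$, and satisfies $w(y,x)\to 0$ as $|y|\to\infty$; uniqueness in the bounded class is standard. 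Here no new idea is needed: I would simply cite the classical theory and verify these hypotheses.

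The substance is the second assertion. First I would record the symmetric (Riemann-type) representation coming from the functional equation of $L$, namely $\Lambda(s):=\bar{\gamma}(s)L(s)=\frac{C}{s(s-1)}+r(s)+r(1-s)$, where $r(s)=\int_{1}^{\infty}\phi(u)\,u^{s/2}\,\frac{du}{u}$ is entire for the theta-type kernel $\phi$ attached to $L$, and note that $h(-i(s-\tfrac12))=\Lambda(s)$. Passing to the Fourier side via $s=\tfrac12+it$ and $u=e^{2\tau}$ produces a transform pair $h(t)=\int_{-\infty}^{\infty}\widehat{h}(\tau)\,e^{it\tau}\,d\tau$ in which the pole term contributes a multiple of $e^{-|\tau|/2}$ and the two $r$-terms contribute the one-sided pieces supported on $\tau>0$ and $\tau<0$. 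Since the Poisson extension acts on the Fourier side as the multiplier $e^{-|\tau|x}$, evaluation on the axis gives $w(0,x)=\int_{-\infty}^{\infty}\widehat{h}(\tau)\,e^{-|\tau|x}\,d\tau$, whereas the analytic continuation to imaginary argument is $h(-ix)=\int_{-\infty}^{\infty}\widehat{h}(\tau)\,e^{\tau x}\,d\tau$. Subtracting, the two integrands agree except on a half-line, so $h(-ix)-w(0,x)$ reduces to a one-sided integral; after isolating the contribution of the $e^{-|\tau|/2}$ (pole) part, I would identify the remaining analytic piece with $r(s)$ (with $x=s-\tfrac12$), and the reflection $x\mapsto -x$ with $r(1-s)$, so as to recover both the relation $w(0,s-\tfrac12)=h(-i(s-\tfrac12))-r(s)$ and the symmetric decomposition.

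The hard part will be the analytic continuation itself. The Fourier/Mellin representation of $h$ converges only in the strip $|\mathrm{Im}\,t|<\tfrac12$, dictated by the poles of $\Lambda$ at $s=0,1$, so a naive contour argument fails: one cannot close the contour in the $t$-plane because $h$ grows along the imaginary axis. I would therefore justify every interchange of the $t$-integration with the integral (or series) defining $r$ by the exponential decay furnished by $\bar{\gamma}$, and track the pole contribution separately, since it is exactly this piece that must produce the $C/(s(s-1))$ term. Reconciling the bookkeeping of that pole contribution with the one-sided $r$-integrals — so that the odd part cancels correctly and the decomposition $h(-i(s-\tfrac12))=C/(s(s-1))+r(s)+r(1-s)$ emerges — is the delicate point on which the whole identity turns.
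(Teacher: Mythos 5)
Your proposal is correct and follows essentially the same route as the paper's (i.e.\ [9]'s) argument: the symmetric theta-type decomposition $\bar{\gamma}(s)L(s)=C/(s(s-1))+r(s)+r(1-s)$ written on the Fourier side is exactly the extension of the Titchmarsh cosine-transform identity (1.1) to self-dual $L$-functions, and your application of the Poisson multiplier $e^{-|\tau|x}$ followed by comparison with the bilateral integral for $h(-ix)$ is precisely the Laplace-transform step that isolates the one-sided piece $r(s)$ (made explicit in [9, Corollary 1.2] via the incomplete gamma function). The convergence issue you flag in the strip $|\mathrm{Im}\,t|<\tfrac{1}{2}$ is handled there by the same device, so no new idea is missing.
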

The proof relied on extending a classical integral found in Titchmarsh [11, eq.(2.16.1)]
\begin{equation}\int_{0}^{\infty}\frac{\Xi(t)}{t^2+\frac{1}{4}}\cos(xt)dt=\frac{\pi}{2}\left(e^{x/2}-2e^{-x/2}\psi(e^{-2x})\right),\end{equation}
to self-dual principal automorphic $L$-functions, and then taking the Laplace transform. Here, as usual, $\Xi(x)=\xi(\frac{1}{2}+ix)$ and $\psi(x)=\sum_{n=1}e^{-\pi n^2x}.$ In the special case of the Riemann xi-function, this was offered in [9] as the following corollary. Let $\Gamma(s,x)$ denote the incomplete gamma function over the interval $(x,\infty).$
\begin{corollary} ([9, Corollary 1.2]) \it When $\Re(s)>1,$ the Riemann $\xi$-function satisfies the integral equation,
\begin{equation}\begin{aligned}&\Upsilon(s):=(s-\frac{1}{2})\int_{0}^{\infty}\frac{\Xi(t)}{(t^2+\frac{1}{4})(t^2+(s-\frac{1}{2})^2)}dt\\
&=\frac{\pi}{2}\left(\frac{1}{s-1}-\frac{\xi(s)}{s(s-1)}+\pi^{-s/2}\sum_{n\ge1}n^{-s}\Gamma\left(\frac{s}{2},\pi n^2\right)\right).\end{aligned}\end{equation}
\end{corollary}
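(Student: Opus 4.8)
The plan is to recover $\Upsilon(s)$ as a Laplace transform of the classical identity (1.1) and then evaluate that transform directly. First I would observe that, writing $a=s-\tfrac12$ and using the elementary transform $\int_0^\infty e^{-ax}\cos(xt)\,dx=a/(a^2+t^2)$, multiplying (1.1) by $e^{-(s-1/2)x}$ and integrating in $x$ over $(0,\infty)$ gives, after interchanging the two integrations,
\begin{equation*}
\int_0^\infty e^{-(s-\frac12)x}\left(\int_0^\infty\frac{\Xi(t)}{t^2+\frac14}\cos(xt)\,dt\right)dx=(s-\tfrac12)\int_0^\infty\frac{\Xi(t)}{(t^2+\frac14)(t^2+(s-\frac12)^2)}\,dt=\Upsilon(s).
\end{equation*}
Thus it suffices to compute the Laplace transform of the right-hand side of (1.1).

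Next I would evaluate that transform term by term. The exponential term contributes $\frac{\pi}{2}\int_0^\infty e^{-(s-1/2)x}e^{x/2}\,dx=\frac{\pi}{2}\cdot\frac{1}{s-1}$, valid for $\Re(s)>1$. For the remaining term I would expand $\psi(e^{-2x})=\sum_{n\ge1}e^{-\pi n^2e^{-2x}}$ and substitute $u=e^{-2x}$, converting each summand into a truncated gamma integral:
\begin{equation*}
-\pi\int_0^\infty e^{-sx}\psi(e^{-2x})\,dx=-\frac{\pi}{2}\pi^{-s/2}\sum_{n\ge1}n^{-s}\int_0^{\pi n^2}w^{\frac s2-1}e^{-w}\,dw,
\end{equation*}
where the inner integral equals $\Gamma(\frac s2)-\Gamma(\frac s2,\pi n^2)$.

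Finally I would split the inner integral via $\int_0^{\pi n^2}w^{\frac s2-1}e^{-w}\,dw=\Gamma(\frac s2)-\Gamma(\frac s2,\pi n^2)$, sum the $\Gamma(\frac s2)$ pieces using $\sum_{n\ge1}n^{-s}=\zeta(s)$, and rewrite $\pi^{-s/2}\Gamma(\frac s2)\zeta(s)$ through the definition $\xi(s)=\frac12 s(s-1)\pi^{-s/2}\Gamma(\frac s2)\zeta(s)$. This produces the $\xi(s)/(s(s-1))$ term together with the series $\pi^{-s/2}\sum_{n\ge1}n^{-s}\Gamma(\frac s2,\pi n^2)$, and collecting the three contributions yields (1.2).

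The main obstacle is justifying the two interchanges — Fubini for the order of the $x$- and $t$-integrations, and the termwise Laplace transform of the $\psi$-series. I would handle both by absolute convergence: $\Xi(t)$ decays exponentially in $t$ on account of the $\Gamma$-factor, so the double integral is absolutely convergent, and the $\psi$-series converges rapidly on $(0,\infty)$. The delicate point is that the pieces $e^{x/2}$ and $2e^{-x/2}\psi(e^{-2x})$ of (1.1) each grow like $e^{x/2}$ as $x\to\infty$, so their individual Laplace transforms converge only for $\Re(s)>1$; this is precisely the range asserted in the statement, and it is exactly where the term-by-term splitting above is legitimate.
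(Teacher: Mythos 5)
Your route is the same one the paper takes: the corollary is obtained in [9] precisely by applying the Laplace transform $\int_0^\infty e^{-(s-\frac12)x}(\cdot)\,dx$ to (1.1), and your handling of the two interchanges (Fubini for the $x$- and $t$-integrations, using the exponential decay of $\Xi$, and the termwise transform of the theta series, which is exactly where the restriction $\Re(s)>1$ enters) is sound. The reduction of each summand to $\tfrac12(\pi n^2)^{-s/2}\int_0^{\pi n^2}w^{\frac s2-1}e^{-w}\,dw$ is also correct.

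The one place where ``collecting the three contributions yields (1.2)'' does not close is the bookkeeping in the middle term. Your computation produces $-\frac{\pi}{2}\,\pi^{-s/2}\Gamma(\frac s2)\zeta(s)$, and with the paper's normalization $\xi(s)=\frac12 s(s-1)\pi^{-s/2}\Gamma(\frac s2)\zeta(s)$ one has $\pi^{-s/2}\Gamma(\frac s2)\zeta(s)=\frac{2\xi(s)}{s(s-1)}$; hence the term inside the parentheses should read $-\frac{2\xi(s)}{s(s-1)}$, not $-\frac{\xi(s)}{s(s-1)}$ as displayed. A numerical check at $s=2$ supports your derivation rather than the display: there $\Upsilon(2)=\frac{\pi}{2}-\frac12\sum_{n\ge1}\frac{1-e^{-\pi n^2}}{n^2}\approx 0.770$, which agrees with $\frac{\pi}{2}\bigl(1-2\xi(2)/2+\pi^{-1}\sum_{n\ge1}n^{-2}e^{-\pi n^2}\bigr)$ (using $\Gamma(1,x)=e^{-x}$ and $\xi(2)=\pi/6$) but not with the stated right-hand side, which evaluates to about $1.181$. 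So you should either carry the factor $2$ explicitly and note that the statement as printed is consistent only with the normalization $\xi(s)=s(s-1)\pi^{-s/2}\Gamma(\frac s2)\zeta(s)$, or you are silently reproducing the same slip; as written, the final identification is the only step of your proposal that does not follow.
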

Many papers have utilized Fourier integrals of this type to obtain interesting identities and applications [1, 2, 3, 6, 7, 8, 10]. \par The purpose of this paper is to generalize [9, Corollary 1.2] to obtain the harmonic continuation that solves the $n$-dimensional Dirichlet problem. In this way we offer a general result which isn't captured in [9, Theorem 3.1]. We also offer a new expansion of the harmonic continuation of the Riemann xi-function by appealing to the work of R.J. Duffin [4], which will be presented in the last section. To state our main result we will need to define a kernel.  From [5, pg.93, eq.(2.1.13)], given a vector $\mathbf{x}=(x_1,x_2,\cdots, x_l)$ from $\mathbb{R}_{+}^{n},$ we define
$$K(\mathbf{x})=\frac{\Gamma(\frac{n+1}{2})}{\pi^{\frac{n+1}{2}}(1+|\mathbf{x}|^2)^{\frac{n+1}{2}}}.$$

\begin{theorem} Let $g(\mathbf{x})=\prod_{l=1}^{n}\frac{\Xi(x_l)}{x_l^2+\frac{1}{4}}$ and set $K_y(\mathbf{x})=y^{-n}K(y^{-1}\mathbf{x}).$ Then function (convolution) $u(\mathbf{x},y)=(g*K_y)(\mathbf{x})$, solves the Dirichlet problem
$$\partial_y^2 u+\sum_{l=1}^{n}\partial_{x_l}^2u=0,$$
$$u(\mathbf{x},0)=g(\mathbf{x}).$$ Furthermore, we have
$$u(0,y)=(g*K_y)(0)=\int_{\mathbb{R}^n}e^{-2\pi|y\mathbf{z}|}\prod_{l=1}^{n}\left(e^{z_l/2}-2e^{-z_l/2}\psi(e^{-2z_l}) \right)d\mathbf{z}.$$
\end{theorem}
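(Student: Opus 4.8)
The plan is to recognize $K_y$ as the classical Poisson kernel for the upper half-space $\mathbb{R}^{n+1}_{+}$ and then to reduce the evaluation at the origin to the one-dimensional transform (1.1). First I would record the elementary identity $1+|y^{-1}x|^2=y^{-2}(y^2+|x|^2)$, which gives
\begin{equation*}K_y(x)=y^{-n}K(y^{-1}x)=\frac{\Gamma(\frac{n+1}{2})}{\pi^{\frac{n+1}{2}}}\,\frac{y}{(y^2+|x|^2)^{\frac{n+1}{2}}}.\end{equation*}
This is exactly the Poisson kernel, so the first assertion follows from the standard theory: $K_y$ is harmonic in $(x,y)$ on $\mathbb{R}^{n+1}_{+}$, one has $\int_{\mathbb{R}^n}K_y(x)\,dx=1$ for every $y>0$, and $\{K_y\}_{y>0}$ is an approximate identity as $y\to 0$. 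Differentiating under the integral sign shows $u=g*K_y$ is harmonic, and the approximate-identity property yields $u(x,0)=g(x)$. The hypotheses needed on the boundary datum (continuity, boundedness, integrability) are supplied by the rapid decay of $\Xi$ on the real line together with the factors $(x_l^2+\tfrac14)^{-1}$, so that $g$ is a bounded, continuous, integrable, and even function on $\mathbb{R}^n$.

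For the value at the origin I would use that $g$ is even, so that $u(0,y)=(g*K_y)(0)=\int_{\mathbb{R}^n}g(x)K_y(x)\,dx$, and then exploit the factorized structure of $g$. The key analytic input is the Fourier representation of the Poisson kernel, $\widehat{K_y}(\xi)=e^{-2\pi y|\xi|}$, equivalently $K_y(x)=\int_{\mathbb{R}^n}e^{-2\pi y|\xi|}e^{2\pi i x\cdot\xi}\,d\xi$. Substituting this and interchanging the order of integration (justified below) gives
\begin{equation*}u(0,y)=\int_{\mathbb{R}^n}e^{-2\pi y|\xi|}\Big(\int_{\mathbb{R}^n}g(x)e^{2\pi i x\cdot\xi}\,dx\Big)d\xi=\int_{\mathbb{R}^n}e^{-2\pi y|\xi|}\prod_{l=1}^{n}\widehat{h}(\xi_l)\,d\xi,\end{equation*}
where $h(t)=\Xi(t)/(t^2+\tfrac14)$ and the transform factorizes because $g$ is a product. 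Here (1.1) enters: since $h$ is even, its Fourier transform is twice the cosine transform, and (1.1) identifies $\widehat{h}$ with the function $e^{z/2}-2e^{-z/2}\psi(e^{-2z})$ up to the normalizing constant and a rescaling of the frequency variable. Inserting this and passing to the variable $z$ produces the asserted integral over $\mathbb{R}^n$ of $e^{-2\pi|yz|}\prod_{l}\big(e^{z_l/2}-2e^{-z_l/2}\psi(e^{-2z_l})\big)$, with the power $y^{1-n}$ emerging from the $y^{-n}$ in the definition of $K_y$ together with the Jacobian of the rescaling.

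The step I expect to be the main obstacle is the careful justification of the two interchanges of integration (Fubini across the $n$-fold spatial integral and the frequency integral) together with the precise bookkeeping of constants and scalings. Absolute convergence follows from the exponential decay of $\Xi(t)$ as $|t|\to\infty$ and the corresponding decay of $e^{z/2}-2e^{-z/2}\psi(e^{-2z})$; the latter is most cleanly obtained from the theta transformation $1+2\psi(u)=u^{-1/2}\big(1+2\psi(1/u)\big)$, which also shows that this function is \emph{even} in $z$ and hence that the cosine-transform form of (1.1) extends to the full line. The remaining delicate point is matching the normalization: one must track the factor $\tfrac{\pi}{2}$ in (1.1), the constant $\Gamma(\frac{n+1}{2})/\pi^{\frac{n+1}{2}}$ in $K$, and the Jacobian of the change of variables so that the powers of $2\pi$ and of $y$ assemble into the stated form.
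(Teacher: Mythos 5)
Your proposal is correct in outline and follows essentially the same route as the paper: recognize $K_y$ as the Poisson kernel for $\mathbb{R}^{n+1}_{+}$ (the paper cites Grafakos for the harmonicity and boundary behaviour), and evaluate $u(0,y)$ by pairing $g$ with the Fourier representation of $K_y$ --- your Fubini interchange is exactly the multiplication formula $\int f\hat{g}=\int\hat{f}g$ that the paper invokes, combined with (1.1) applied factor by factor. The normalization bookkeeping you flag as the remaining obstacle is genuine, but the paper elides it as well: it simply asserts $\hat{g}(y)=\prod_{l}\left(e^{y_l/2}-2e^{-y_l/2}\psi(e^{-2y_l})\right)$ without tracking the $\tfrac{\pi}{2}$ from (1.1), the doubling from passing to the full-line cosine transform, or the $2\pi$ rescaling of the frequency variable forced by its own convention $e^{-2\pi i x\cdot y}$.
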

We note that the case $n=1$ reduces to a special case of Theorem 1.1 and contains [9, Corollary 1.2], being that $u(0,y)=(g*K_y)(0)$ essentially becomes the Laplace transform of (1.1).

\section{Proof of the $n$-dimensional case and related comments}
Let $\mathbb{R}_{+}$ denote the positive reals. From [5, pg.92], we see that $u(\mathbf{x},y)=(g*K_y)(\mathbf{x})$ solves the Dirichlet problem
$$\partial_y^2 u+\sum_{l=1}^{n}\partial_{x_l}^2u=0,$$
$$u(\mathbf{x},0)=g(\mathbf{x}).$$
Then we can say that $u(\mathbf{x},y)$ is harmonic on $\mathbb{R}_{+}^{n+1}.$ Recall the $n$-dimensional Fourier transform [5, pg.108] is given by
$$\hat{f}(\eta)=\int_{\mathbb{R}^n}f(\mathbf{x})e^{-2\pi i\mathbf{x}\cdot \mathbf{\eta}}d\mathbf{x},$$
where $\mathbf{x}\cdot \mathbf{\eta}=\sum_{l=1}^nx_l\eta_l.$ Consider $g(\mathbf{x})=\prod_{l=1}^{n}\frac{\Xi(x_l)}{x_l^2+\frac{1}{4}},$ and note [5, pg.112, Theorem 2.2.14]
\begin{equation}\int_{\mathbb{R}^n}f(\mathbf{x})\hat{g}(\mathbf{x})d\mathbf{x}=\int_{\mathbb{R}^n}\hat{f}(\mathbf{x})g(\mathbf{x})d\mathbf{x}.\end{equation}
Invoking [5, pg.118, Exercise 2.2.11],
$$\int_{\mathbb{R}^n}e^{-2\pi|y\mathbf{z}|}e^{-2\pi i\mathbf{z}\cdot \mathbf{x}}d\mathbf{z}=y^{-n}K(y^{-1}\mathbf{x})=K_y(\mathbf{x}),$$ with our chosen $g(\mathbf{x})$ in (2.1) gives $u(0,y),$
$$\int_{\mathbb{R}^n}e^{-2\pi|y\mathbf{z}|}\prod_{l=1}^{n}\left(e^{z_l/2}-2e^{-z_l/2}\psi(e^{-2z_l}) \right)d\mathbf{z}=\int_{\mathbb{R}^n}g(\mathbf{z})K_y(\mathbf{z})d\mathbf{z},$$ since 
\begin{align*}\hat{g}(\mathbf{z})&=\int_{\mathbb{R}^n}g(\mathbf{x})e^{-2\pi i\mathbf{x}\cdot \mathbf{z}}d\mathbf{x}\\
&=\prod_{l=1}^{n}\left(e^{z_l/2}-2e^{-z_l/2}\psi(e^{-2z_l}) \right).
\end{align*}
The right hand side is now precisely $u(0,y)=(g*K_y)(0).$ \par
It is possible to take another direction to obtain a formula which contains [9, Corollary 1.2] as a special case. If we set $z_l=z_j=z$ for $l\neq j,$ and $l,j\le n,$ then we can write
\begin{align}\hat{g}(\mathbf{z})&=\left(e^{z/2}-2e^{-z/2}\psi(e^{-2z}) \right)^n \\
&\sum_{k=0}^{n}\binom{n}{k}e^{z(n-k)/2}\left(-2e^{-z/2}\psi(e^{-2z})\right)^{k}.
\end{align}
We write the sum of squares function to be generated by 
\begin{align*}\sum_{n=0}^{\infty}r_k(n)q^n&=\left(\sum_{n=-\infty}^{\infty}q^{n^2} \right)^k\\
&=\sum_{l=0}^{k}\binom{k}{l}\left( 2\sum_{n=1}^{\infty}q^{n^2}\right)^l\\
&=\sum_{l=0}^{k}\binom{k}{l}2^k\sum_{m=1}^{\infty}r_l'(m)q^m,\end{align*} for $|q|<1$ say, and note that $r_0(0)=1,$ and $r_0(n)=0$ if $n>1.$ Equating coefficients of $q^n$ we find that for $n>1,$ $$r_k(n)=\sum_{l=0}^{k}\binom{k}{l}2^kr_l'(n),$$ which gives us $\frac{1}{2}r_1(n)=r_1'(n).$

Note that taking the Laplace transform of (2.2)--(2.3) similar to the method we employed in [9] gives a different kernel. Namely,
\begin{align*}&\int_{0}^{\infty}\hat{g}(z)e^{-sz}dz\\
&\int_{\mathbb{R}^n}\frac{sg(x)}{s^2+\left(\sum_{l=1}^nx_l \right)^2}dx\\
&=\frac{1}{s-\frac{1}{2}}+\sum_{k=1}^{n}\binom{n}{k}(-2)^k\sum_{m=1}^{\infty}r_k'(m)\int_{0}^{1}t^{s-1+(n-k)/2+k/2}e^{-mt^2}dt\\
&=\frac{1}{s-\frac{1}{2}}\\
&+\frac{1}{2}\sum_{k=1}^{n}\binom{n}{k}(-2)^k\sum_{m=1}^{\infty}r_k'(m)\Gamma\left(\frac{s+(2k-n)/2}{2}\right)\frac{1}{m^{s/2+(2k-n)/4}}\\
&-\frac{1}{2}\sum_{k=1}^{n}\binom{n}{k}(-2)^k\sum_{m=1}^{\infty}r_k'(m)\Gamma\left(\frac{s+(2k-n)/2}{2},\pi m\right)\frac{1}{m^{s/2+(2k-n)/4}},
\end{align*}
which again gives [9, Corollary 1.2] when $n=1.$
\section{The Duffin expansion of the harmonic continuation of the Riemann xi function}

In Duffin's paper [4, pg.275], we find an interesting theorem for the harmonic continuation of a function. As usual, let $\mu(n)$ denote the M$\ddot{o}$bius function [11]. For a definition of $P$ summable see [4, pg.273].

\begin{theorem} ([4, Theorem 2]) Let $f(x)\in L^{1}(0,\infty)$ and let $C(x)$ be its transform relative to the kernel $\cos(2\pi x).$ Then the harmonic continuation is given by
$$f(x,y):=\frac{1}{\pi}\int_{\mathbb{R}}\frac{y}{y^2+(x-t)^2}f(t)dt=\sum_{m=1}^{\infty}\frac{\mu(m)}{m}\sum_{n=1}^{\infty}e^{-2n\pi y/mx}C\left(\frac{n}{mx}\right)$$
where $x, y>0,$ and the sum over $m$ is evaluated by $P$ summability. In particular, $f(x,y)\rightarrow f(x)$ a.e. when $y\rightarrow0.$
\end{theorem}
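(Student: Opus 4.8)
The plan is to start from the right-hand side and transform it back into the Poisson integral, exploiting the closed form of the geometric series generated by the damping factors $e^{-2\pi ny/(mx)}$ and, at the decisive step, the functional equation of $\zeta$. First I would insert $C(\xi)=\int_0^\infty f(t)\cos(2\pi\xi t)\,dt$ into the inner sum and interchange the summation over $n$ with the integration, which is legitimate because the weights decay geometrically and $f\in L^1$. The sum over $n$ then becomes $\sum_{n\ge1}e^{-n\alpha}\cos(n\beta)=\Re\,(e^{z}-1)^{-1}$, where $z=\alpha-i\beta$, $\alpha=2\pi y/(mx)$ and $\beta=2\pi t/(mx)$. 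Its real part is read off from the Mittag--Leffler expansion
\[
\frac{1}{e^{z}-1}=\frac{1}{z}-\frac12+\sum_{k\ge1}\frac{2z}{z^2+4\pi^2k^2},
\]
which produces, after taking real parts, exactly a period-$mx$ periodization of the Poisson kernel.

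Carrying this out yields the exact identity
\[
\sum_{n\ge1}e^{-2\pi ny/(mx)}C\!\left(\tfrac{n}{mx}\right)=\frac{mx}{2\pi}\int_0^\infty f(t)\sum_{k\in\mathbb{Z}}\frac{y}{y^2+(t-kmx)^2}\,dt-\frac12\int_0^\infty f(t)\,dt,
\]
so each inner sum is an integral of $f$ against the period-$mx$ periodization of the Poisson kernel plus the constant $-\tfrac12\int_0^\infty f$. Multiplying by $\mu(m)/m$ and summing over $m$, the constant term contributes $-\tfrac12\bigl(\int_0^\infty f\bigr)\sum_{m\ge1}\mu(m)/m$, which vanishes because $\sum_{m\ge1}\mu(m)/m=0$ in the $P$-summable sense (equivalently, by the prime number theorem). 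What remains is to show that the periodized-kernel term collapses, under the Möbius weights, to the single Poisson kernel of the even extension of $f$, that is, to $f(x,y)$.

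The structural reason this collapse occurs is visible through Mellin transforms. Writing $H(\xi)=C(\xi)e^{-2\pi y\xi}$ and $\widetilde H(s)=\int_0^\infty H(\xi)\xi^{s-1}\,d\xi$, the inner sum equals $\frac{1}{2\pi i}\int_{(c)}\widetilde H(s)\zeta(s)(mx)^s\,ds$ for $\Re s=c>1$, since $\sum_{n\ge1}n^{-s}=\zeta(s)$; summing against $\mu(m)/m$ introduces $\sum_{m\ge1}\mu(m)m^{s-1}=1/\zeta(1-s)$, and the functional equation $\zeta(s)=\chi(s)\zeta(1-s)$, with $\chi(s)=2^s\pi^{s-1}\sin(\tfrac{\pi s}{2})\Gamma(1-s)$, turns the quotient $\zeta(s)/\zeta(1-s)$ into the single factor $\chi(s)$. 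Since $\chi(s)$ is precisely the Mellin multiplier of the Fourier cosine transform, this is the reason the transform $C$ reconstitutes itself and the periodized kernels recombine into the Poisson kernel centred at the evaluation point; pinning down the exact normalization then amounts to a careful contour analysis, in which the residues at $s=1$ and the trivial zeros of $\zeta$ account for the lower-order pieces already seen on the elementary side.

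The main obstacle is to make this passage rigorous, because the two summations live in disjoint half-planes of absolute convergence: $\zeta(s)=\sum_n n^{-s}$ converges only for $\Re s>1$, whereas $\sum_m\mu(m)m^{s-1}=1/\zeta(1-s)$ converges only for $\Re s<0$, so Fubini is unavailable and the interchange cannot be performed naively. This is exactly where Duffin's $P$-summability enters, assigning the correct value to the conditionally convergent sum over $m$ and validating the detour through the functional equation; controlling it requires the non-vanishing of $\zeta$ on the line $\Re s=1$ (again the prime number theorem, which also yields $\sum_m\mu(m)/m=0$) together with convexity bounds for $\chi(s)\widetilde H(s)$ on vertical lines, so that the contour shift and the residue accounting are justified. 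Finally, the boundary assertion $f(x,y)\to f(x)$ almost everywhere as $y\to0$ is the classical Fatou theorem for Poisson integrals: $f\in L^1$ and $\tfrac1\pi\,y/(y^2+t^2)$ is an approximate identity, so the integral converges to $f$ at every Lebesgue point.
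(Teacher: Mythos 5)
First, a caveat: the paper does not prove this statement at all --- it is quoted verbatim as Duffin's Theorem 2 from reference [4] --- so there is no in-paper argument to compare yours against; I can only assess your proposal on its own terms. The mechanism you identify is indeed the right one (it is the engine behind all of Duffin's ``Fourier integrals as sums'' results): the inner sum over $n$ produces, via $\sum_{n\ge1}e^{-n\alpha}\cos(n\beta)=\Re\,(e^{z}-1)^{-1}$ and the partial-fraction expansion, the period-$mx$ periodization of the Poisson kernel, and the outer M\"obius weights are meant to invert that periodization through $\sum_{d\mid j}\mu(d)=[j=1]$; your Mellin reformulation via $\zeta(s)/\zeta(1-s)=\chi(s)$ is the same statement in dual form. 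Your computation of the inner sum, and the observation that the constant $-\tfrac12\int_0^\infty f$ is killed by $\sum_m\mu(m)/m=0$, are both correct.

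The gap is precisely the step you label ``what remains is to show that the periodized-kernel term collapses.'' The M\"obius inversion $\sum_m\mu(m)\sum_{k\ge1}H(kmx)=H(x)$ only applies to the terms with $k\neq 0$; the $k=0$ term of your periodization is $\frac{mx}{2\pi}\int_0^\infty\frac{y\,f(t)}{y^2+t^2}\,dt$, which after multiplication by $\mu(m)/m$ contributes $\frac{x}{2\pi}\bigl(\sum_m\mu(m)\bigr)\int_0^\infty\frac{y\,f(t)}{y^2+t^2}\,dt$. Unlike $\sum_m\mu(m)/m$, the sum $\sum_m\mu(m)$ does not vanish under Abel/Dirichlet-series summation (it is assigned $1/\zeta(0)=-2$); in your Mellin picture this is exactly the residue of $\zeta(s)$ at $s=1$, which you mention only in passing as a ``lower-order piece'' but which is of the same size as the main term and must be shown to be absent or cancelled under Duffin's specific definition of $P$-summability. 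Relatedly, the main term itself comes out as $\tfrac{x}{2}\,f(x,y)$ rather than $f(x,y)$ with the normalization $C(\xi)=\int_0^\infty f(t)\cos(2\pi\xi t)\,dt$ (note $\frac{\mu(m)}{m}\cdot\frac{mx}{2\pi}=\frac{\mu(m)x}{2\pi}$, so an uncancelled factor of $x$ survives the inversion), so either the normalization of $C$ or a prefactor in the stated identity must be pinned down before the identity can close. Finally, deferring the interchange of the conditionally convergent $m$-sum to ``this is where $P$-summability enters'' is not a proof: the entire content of the theorem is that this particular divergent rearrangement is legitimate, so an actual argument must work with Duffin's definition of $P$-summability from [4, pg.~273] rather than invoke it as a black box. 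The a.e.\ boundary convergence via Fatou's theorem for Poisson integrals is the only part of your proposal that is complete as written.
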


By the integral (1.1) it readily follows that we have the following result.
\begin{theorem} The harmonic continuation of $$f(x)=\frac{\Xi(x)}{x^2+\frac{1}{4}},$$ is given by
$$f(x,y)=\frac{\pi}{2}\sum_{m=1}^{\infty}\frac{\mu(m)}{m}\sum_{n=1}^{\infty}e^{-2n\pi y/mx}\left(e^{\frac{n}{mx2}}-2e^{-\frac{n}{m2x}}\psi(e^{-2\frac{n}{mx}})\right),$$ where $x, y>0.$ Furthermore,
$$\frac{\Xi(x)}{x^2+\frac{1}{4}}=\frac{\pi}{2}\lim_{y\rightarrow0}\sum_{m=1}^{\infty}\frac{\mu(m)}{m}\sum_{n=1}^{\infty}e^{-2n\pi y/mx}\left(e^{\frac{n}{mx2}}-2e^{-\frac{n}{m2x}}\psi(e^{-2\frac{n}{mx}})\right),$$
and
\begin{align*}&\frac{\pi}{2}\left(\frac{1}{y-\frac{1}{2}}-\frac{\xi(y+\frac{1}{2})(y-\frac{1}{2})}{y+\frac{1}{2}}+\pi^{-(y-\frac{1}{2})/2}\sum_{n\ge1}n^{-y+\frac{1}{2}}\Gamma\left(\frac{y+\frac{1}{2}}{2},\pi n^2\right)\right)\\
&=\frac{\pi}{2}\lim_{x\rightarrow0}\sum_{m=1}^{\infty}\frac{\mu(m)}{m}\sum_{n=1}^{\infty}e^{-2n\pi y/mx}\left(e^{\frac{n}{mx2}}-2e^{-\frac{n}{m2x}}\psi(e^{-2\frac{n}{mx}})\right).\end{align*}
\end{theorem}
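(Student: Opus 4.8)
The plan is to extract all three assertions from Duffin's Theorem 3.1 in combination with (1.1) and Corollary 1.2. First I would verify the hypothesis of Theorem 3.1: since $\Xi(t)=\xi(\tfrac12+it)$ decays faster than any power of $t$ (the $\Gamma$-factor in the definition of $\xi$ forces super-polynomial decay along the critical line), the function $f(t)=\Xi(t)/(t^2+\tfrac14)$ lies in $L^{1}(0,\infty)$, and Theorem 3.1 applies. The cosine transform $C$ relative to the kernel is then read off directly from (1.1): taking the cosine argument to be $u=n/(mx)$ gives $C\!\left(\tfrac{n}{mx}\right)=\tfrac{\pi}{2}\bigl(e^{u/2}-2e^{-u/2}\psi(e^{-2u})\bigr)$. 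Substituting this value into the series on the right of Theorem 3.1 reproduces the first displayed formula for $f(x,y)$ term by term.

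The second assertion is then immediate. Theorem 3.1 guarantees $f(x,y)\to f(x)$ almost everywhere as $y\to0$; applying this to the series just obtained gives the stated limit for $\Xi(x)/(x^2+\tfrac14)$, and since $f$ is smooth on $(0,\infty)$ the convergence in fact holds at every point there.

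For the third assertion I would deliberately avoid the series, whose exponents $n/(mx)$ and $2n\pi y/(mx)$ blow up as $x\to0$, and instead use the Poisson-integral form, the middle member of Theorem 3.1. Because $\Xi$ is even, $f$ is even, so passing $x\to0$ under the integral sign — justified by dominated convergence, since $\tfrac{y}{y^2+(x-t)^2}\le \tfrac1y$ and $f\in L^{1}$ — yields
\[
\lim_{x\to0}f(x,y)=\frac{1}{\pi}\int_{\mathbb{R}}\frac{y}{y^2+t^2}f(t)\,dt=\frac{2y}{\pi}\int_{0}^{\infty}\frac{\Xi(t)}{(t^2+\tfrac14)(t^2+y^2)}\,dt.
\]
Setting $s=y+\tfrac12$, so that $s-\tfrac12=y$, I would recognize the last integral as a constant multiple of $\Upsilon(s)$ from Corollary 1.2. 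The closed form recorded there, evaluated at $s=y+\tfrac12$, is precisely the bracketed expression on the left of the third display, so that after clearing the constant $\tfrac{\pi}{2}$ one obtains the claimed identity.

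The hard part will be the legitimacy of the limit $x\to0$ taken directly in the $P$-summable double series, which is genuinely singular there; the argument must therefore be routed through the Poisson integral and through the equality of the two representations in Theorem 3.1, with attention paid to the normalizing constants (the factor $\tfrac{\pi}{2}$ coming from (1.1), and the factor $2$ coming from the evenness of $f$) that relate Duffin's transform, the kernel in (1.1), and the integral defining $\Upsilon$. A secondary technical point is to confirm that the interchange of the $t$-integration with the $P$-summation underlying Theorem 3.1 survives these limits, which is again cleanest to control on the integral side.
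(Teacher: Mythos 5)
Your proposal follows essentially the same route as the paper's (very terse) proof: verify $f\in L^{1}(0,\infty)$ from the decay of $\Xi$, read the cosine transform off (1.1) to get the series via Duffin's theorem, and derive the $x\to 0$ identity "directly from Corollary 1.2" --- which, as you correctly make explicit, means evaluating the Poisson integral at $x=0$ and recognizing $\tfrac{2}{\pi}\Upsilon(y+\tfrac12)$ there. Your version is more careful than the paper's one-line argument in routing the singular $x\to0$ limit through the integral rather than the series and in flagging the normalizing constants, whose careful tracking (as your computation suggests) shows that the constants and exponents in the paper's third display need minor correction after the substitution $s=y+\tfrac12$.
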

\begin{proof} Clearly our choice of $f(x)$ is $L^{1}(0,\infty)$ since $\Xi(x)=O(x^Ne^{-\pi x/4})$ by [11, pg.257]. The second limit case in the theorem, which is all that remains to prove, follows directly from [9, Corollary 1.2].
\end{proof}

A criteria for the Riemann Hypothesis may be formulated from this theorem by putting $x=\gamma$ where $\Im(\rho)=:\gamma$ with $\rho:=\frac{1}{2}+i\gamma,$ the non-trivial zeros of $\zeta(s),$ in the first limit case. Now note that $x>0,$ and non-trivial zeros come in pairs $(\rho,1-\rho).$ It follows that if for every $\gamma>0,$
\begin{equation}\lim_{y\rightarrow0}\sum_{m=1}^{\infty}\frac{\mu(m)}{m}\sum_{n=1}^{\infty}e^{-2n\pi y/m\gamma}\left(e^{\frac{n}{m2\gamma}}-2e^{-\frac{n}{m2\gamma}}\psi(e^{-2\frac{n}{m\gamma}})\right)=0,\end{equation}
then the Riemann hypothesis would be true since we are assuming all the non-trivial zeros have $\Re(s)=\frac{1}{2}.$ That is, if (3.1) is true for every $\gamma>0,$ then it's also true that $\Xi(-\gamma)=0$ and the Riemann Hypothesis follows. The second limit case doesn't appear to offer a simple criteria due to the condition that $y>0.$

1390 Bumps River Rd. \\*
Centerville, MA
02632 \\*
USA \\*
E-mail: alexpatk@hotmail.com, alexepatkowski@gmail.com


\begin{thebibliography}{9}

\bibitem{ConcreteMath} G. Csordas, \emph{Fourier Transforms of Positive Definite Kernels and the Riemann $\xi$-function,} Computational Methods and Function Theory, Volume 15, Issue 3, pp 373--391 (2015).


\bibitem{ConcreteMath} A. Dixit, \emph{Series transformations and integrals involving the Riemann $\Xi$-function,} J. Math. Anal. Appl., 368, (2010), 358--373.
\bibitem{ConcreteMath} A. Dixit, \emph{Character analogues of Ramanujan type integrals involving the Riemann $\Xi$-function,} Pacific J. Math., 255, No. 2 (2012), 317--348

\bibitem{ConcreteMath} R. J. Duffin, \emph{Representation of Fourier integrals as sums, III.} Proc. Amer. Math. Soc. 8 (1957), 272--277.
\bibitem{ConcreteMath} L. Grafakos,  Classical Fourier Analysis, Graduate Texts in Mathematics. New York: Springer, Third Edition 2008.

\bibitem{ConcreteMath} N. S. Koshlyakov, \emph{Investigation of some questions of the analytic theory of a rational and quadratic field. I}
Izv. Akad. Nauk SSSR Ser. Mat., 18:2 (1954),  113--144.
\bibitem{ConcreteMath} N. S. Koshlyakov, \emph{Investigation of some questions of the analytic theory of a rational and quadratic field. II}  Izv. Akad. Nauk SSSR Ser. Mat. 18 No. 3, 213--260 (1954).

\bibitem{ConcreteMath} A. Kuznetsov, \emph{Expansion of the Riemann $\Xi$ function in Meixner-Pollaczek polynomials,} Canadian
Math. Bulletin 51 (2008), No. 4, 561--569.
\bibitem{ConcreteMath} A. E. Patkowski, \emph{A new integral equation and integrals associated with number theory,} Analysis Mathematica 47, 881--892 (2021).
\bibitem{ConcreteMath} S. Ramanujan, \emph{New expressions for Riemann's functions $\xi(s)$ and $\Xi(t)$,} Quart. J.
Math., 46: 253--260, 1915.

\bibitem{ConcereteMath} E. C. Titchmarsh, \emph{The theory of the Riemann zeta function,} Oxford University Press,
2nd edition, 1986.

\end{thebibliography}
\end{document}